\newtheorem{thm}{Theorem}
\newtheorem{lemma}[thm]{Lemma}
\newcommand{\reals}{\mathbb{R}}
\newcommand{\complex}{\mathbb{C}}
\begin{document}
\author{Mateusz Krukowski}
\affil{\L\'od\'z University of Technology, Institute of Mathematics, \\ W\'ol\-cza\'n\-ska 215, \
90-924 \ \L\'od\'z, \ Poland}
\title{Images of circles, lines, balls and half-planes under M\"{o}bius transformations}
\maketitle
\tableofcontents
\date{}

\section{Introduction}

The idea of the paper arose during complex analysis classes led by the author. During classes, students quickly grew tired of constantly looking for the images of circles, lines, balls and half-planes under \textit{M\"{o}bius transformations}, i.e. functions $h:\overline{\complex}\longrightarrow\overline{\complex}$ of the form
$$h(z) = \frac{az+b}{cz+d}, \hspace{0.4cm}\text{where }\ a,b,c,d\in \complex \hspace{0.4cm}\text{and }\ bc - ad \neq 0.$$

\noindent
However, such a problem was virtually certain to appear on the final exam, composed by the main lecturer. The author decided to write this text in order facilitate the comprehension of the subject. In fact, it is the author's strong belief that the paper will be of considerable value for all students enrolled in complex analysis classes. 

It is a common practice to refer to circles and lines together as \textit{generalized circles}. The idea behind this terminology is that stereographic images of lines onto the Riemann sphere $\overline{C}$ are circles. The most popular result relating generalized circles and M\"{o}bius transformations is the following:

\begin{thm}(\cite{Chadzynski}, p. 32 or \cite{Gamelin}, p. 65)\\
Any M\"{o}bius transformation maps a generalized circle onto a generalized circle.
\end{thm}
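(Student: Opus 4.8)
The plan is to exploit the fact that every M\"{o}bius transformation factors into a short list of elementary maps, each of which is easy to analyze. First I would record the factorization: if $c = 0$ then $h(z) = (a/d)z + b/d$ is affine, while if $c \neq 0$ the identity
$$h(z) = \frac{az+b}{cz+d} = \frac{a}{c} + \frac{bc-ad}{c}\cdot\frac{1}{cz+d}$$
exhibits $h$ as the composition of $z \mapsto cz + d$, the inversion $w \mapsto 1/w$, and $u \mapsto (a/c) + \tfrac{bc-ad}{c}\,u$ (a genuine rotation-dilation, since $bc-ad \neq 0$ and $c \neq 0$). Thus every M\"{o}bius transformation is a finite composition of translations $z\mapsto z+\beta$, rotation-dilations $z \mapsto \alpha z$ with $\alpha \neq 0$, and the inversion $z \mapsto 1/z$. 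Because the class of generalized circles is preserved by a composition as soon as it is preserved by each factor, it suffices to treat these three generators separately.

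The affine generators are immediate: translations and rotation-dilations are similarities of the plane, so they carry lines to lines and circles to circles. The entire difficulty therefore concentrates in the inversion. To handle it uniformly, and to avoid splitting into the four sub-cases (circle versus line, through the origin or not) by hand, I would describe every generalized circle by a single Hermitian equation
$$A\,z\overline{z} + \overline{\beta}\,z + \beta\,\overline{z} + C = 0, \qquad A,C \in \reals,\ \beta \in \complex,\ |\beta|^2 > AC,$$
where $A = 0$ yields the lines and $A \neq 0$ the genuine circles (of centre $-\beta/A$ and radius $\sqrt{|\beta|^2 - AC}/|A|$), as one checks by writing $z = x+iy$.

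Substituting $z = 1/w$ and clearing the denominator by multiplying through by $w\overline{w}$ turns this into
$$C\,w\overline{w} + \beta\,w + \overline{\beta}\,\overline{w} + A = 0,$$
an equation of exactly the same form, with coefficients $A' = C$, $\beta' = \overline{\beta}$, $C' = A$; the nondegeneracy survives because $|\beta'|^2 = |\beta|^2 > AC = A'C'$. This one computation proves that inversion maps generalized circles to generalized circles and, combined with the affine case and the factorization, gives the theorem. The point demanding genuine care, and what I expect to be the main obstacle, is the bookkeeping at $\infty$: inversion interchanges $0$ and $\infty$, so a circle through the origin ($A \neq 0$, $C = 0$) must become a line ($A' = 0$), a line not through the origin must become a circle through the origin, and so on. The Hermitian form renders this transparent, since it simply swaps the roles of $A$ and $C$, confirming consistency in every sub-case.
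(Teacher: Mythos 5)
Your proposal is correct, but it takes a genuinely different route from the paper: the paper does not prove this theorem at all, citing the textbooks of Ch\k{a}dzy\'nski and Gamelin for it, and its own subsequent work (Theorems 2 and 5) establishes the inversion's effect on circles and lines by explicit real-coordinate parametrizations, handled separately in each of the four sub-cases. You share the paper's first step --- the factorization of $h$ into affine maps and $z\mapsto 1/z$, which appears verbatim in the Introduction --- but where the paper then computes case by case with $z = z_* + Re^{i\phi}$ or $y = ax+b$, you encode all generalized circles at once in the Hermitian equation $A\,z\overline{z} + \overline{\beta}\,z + \beta\,\overline{z} + C = 0$ with $A,C\in\reals$, $|\beta|^2 > AC$, and observe that the substitution $z = 1/w$, after multiplying by $w\overline{w}$, simply swaps $A$ with $C$ and conjugates $\beta$, preserving the nondegeneracy condition. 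Your computation checks out (for $A\neq 0$ the equation is $|z+\beta/A|^2 = (|\beta|^2-AC)/A^2$, so the circle data you state is right, and the coefficient swap correctly forces circles through $0$ to become lines and vice versa), and your remark about the bookkeeping at $\infty$ is the right point to flag; one could add that ontoness is automatic since the substitution is an equivalence for $w\neq 0,\infty$ and each elementary map is a bijection of $\overline{\complex}$. What each approach buys: your Hermitian form proves the qualitative theorem in a single stroke with no case analysis, whereas the paper's laborious coordinate computations are precisely what produce the refined information --- which case yields a circle versus a line, and the explicit centres $\frac{-x_*}{R^2-(x_*^2+y_*^2)}+i\frac{y_*}{R^2-(x_*^2+y_*^2)}$ and radii --- that the paper declares to be its purpose, since Theorem 1 alone ``does not specify under what conditions the circle is mapped onto a circle or a line.'' Indeed, your coefficient dictionary $(A,\beta,C)\mapsto(C,\overline{\beta},A)$ would recover those explicit formulas too, arguably more cleanly, so your method could serve as a unifying framework for the rest of the paper, though it addresses sets defined by equalities and would need the corresponding inequality bookkeeping to recover the paper's results on balls and half-planes.
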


Despite the stunning beauty of the above theorem, it does not specify under what conditions the circle is mapped onto a circle or a line etc.  In the sequel, we will attempt to correct this nagging flaw.

For $c=0$ the M\"{o}bius transformation reduces to an \textit{affine transformation} (in that scenario $d\neq 0$). If $c\neq 0$, we can write
$$\forall_{z\in\overline{\complex}}\ h(z) = \frac{a}{c} + \frac{bc-ad}{c}\cdot \frac{1}{cz+d},$$

\noindent
which proves that any M\"{o}bius transformation is a composition of affine transformations and the inversion map $z\mapsto z^{-1}$. Calculating the images of sets under affine transformations is a trivial issue, so in the following chapters we focus our attention only on the inversion function.

\section{Inverting circles and balls}

Until the end of the paper $\iota:\overline{\complex}\longrightarrow\overline{\complex}$ will denote the \textit{inversion}, i.e. 
$$\iota(z) = z^{-1}, \hspace{0.4cm}\text{where}\hspace{0.4cm} \iota(0) = \infty,\ \iota(\infty) = 0.$$ 

\noindent
As the title suggests, the current chapter focuses on finding the images of circles and balls under $\iota.$ Theorem \ref{imageofcircle} below describes the image of an arbitrary circle under the inversion.

\begin{thm}
Let $S(z_*,R)$ be a circle, centered at $z_*$ with radius $R>0$.
\begin{itemize}
	\item If $0\not\in S(z_*,R)$, then $\iota(S(z_*,R))$ is a circle described by the equation:
	$$\left(x - \frac{-x_*}{R^2 - (x_*^2 + y_*^2)}\right)^2 + \left(y - \frac{y_*}{R^2 - (x_*^2 + y_*^2)}\right)^2 = \left(\frac{R}{R^2-(x_*^2+y_*^2)}\right)^2.$$
	\item If $0\in S(z_*,R)$, then $\iota(S(z_*,R))$ is a line described by the equation
	$$x_*x-y_*y = \frac{1}{2}.$$
\end{itemize}
\label{imageofcircle}
\end{thm}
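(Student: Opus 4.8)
The plan is to pass from the source variable to the image variable by a direct substitution and then simply read off the resulting equation. I would write a generic point of the circle as $z = \xi + i\eta$; membership in $S(z_*,R)$ means $|z-z_*|^2 = R^2$, which after expansion reads
$$\xi^2 + \eta^2 - 2x_*\xi - 2y_*\eta + \bigl(x_*^2 + y_*^2 - R^2\bigr) = 0.$$
Setting $w = \iota(z) = 1/z$ and writing $w = x + iy$ (so that $x,y$ are exactly the coordinates appearing in the statement), I would use that $\iota$ is an involution to invert the relation as $z = 1/w = \bar{w}/|w|^2$, which gives
$$\xi = \frac{x}{x^2+y^2}, \qquad \eta = \frac{-y}{x^2+y^2}, \qquad \xi^2 + \eta^2 = \frac{1}{x^2+y^2}.$$

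Next I would substitute these three identities into the expanded circle equation and clear denominators by multiplying through by $x^2+y^2$ (legitimate for $w \neq 0,\infty$). The computation is routine: the quadratic terms collapse thanks to the identity $\xi^2+\eta^2 = 1/(x^2+y^2)$, leaving
$$\bigl(x_*^2 + y_*^2 - R^2\bigr)\,(x^2+y^2) - 2x_*x + 2y_*y + 1 = 0.$$
The whole theorem now hinges on the scalar coefficient $x_*^2 + y_*^2 - R^2 = |z_*|^2 - R^2$, which vanishes \emph{precisely} when $0 \in S(z_*,R)$. Recognizing this dichotomy is the conceptual heart of the argument.

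If $0 \notin S(z_*,R)$ the coefficient is nonzero, so I would divide by it and complete the square in $x$ and in $y$; evaluating the right-hand side via $x_*^2+y_*^2 - (x_*^2+y_*^2-R^2) = R^2$ reproduces exactly the circle claimed in the statement, after rewriting the coefficient as $R^2-(x_*^2+y_*^2)$ to match the displayed signs. If $0 \in S(z_*,R)$ the quadratic term disappears outright and the equation degenerates to the linear relation $x_*x - y_*y = \tfrac12$, which is the asserted line.

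The step needing the most care is not the algebra but the bijectivity bookkeeping: I must confirm that the derived equation describes the \emph{entire} image rather than a proper subset of the target curve. This is clean because $\iota$ is a bijection of $\overline{\complex}$ equal to its own inverse, so the substitution above is invertible and the two equations are genuinely equivalent; hence every point of the resulting circle or line does have a preimage on $S(z_*,R)$. I would close with a one-line remark reconciling the cases with the point at infinity: when $0\in S(z_*,R)$ the origin is carried to $\infty$, so the image legitimately ``passes through infinity'' and must be a line rather than a bounded circle, consistent with Theorem 1.
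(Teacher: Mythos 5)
Your proposal is correct, and it takes a genuinely different route from the paper's. The paper parametrizes the circle as $z = z_* + Re^{i\phi}$, introduces $u = x_* + R\cos(\phi)$ and $v = y_* + R\sin(\phi)$, and then, with the center and radius (resp.\ the line's coefficients) posited in advance, confirms by trigonometric expansion that each image point $\frac{u-iv}{u^2+v^2}$ satisfies the stated equation; the two cases require two separate ad hoc verifications (choosing $a,b$ in one, $a,b,c$ in the other). You instead exploit that $\iota$ is an involution: substituting $\xi = \frac{x}{x^2+y^2}$, $\eta = \frac{-y}{x^2+y^2}$ into the expanded equation of $S(z_*,R)$ and clearing denominators yields the single equivalent equation $(x_*^2+y_*^2-R^2)(x^2+y^2) - 2x_*x + 2y_*y + 1 = 0$, from which both cases are read off according to whether the coefficient $x_*^2+y_*^2-R^2$ vanishes, so the circle/line dichotomy is structural rather than checked case by case. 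Your route buys two things. First, the image equation is derived, not guessed and verified. Second --- and this is a genuine improvement --- because every step is reversible for $w \neq 0,\infty$, you obtain actual set equality: the paper's parametrized computation, read literally, proves only that $\iota(S(z_*,R))$ is \emph{contained} in the stated circle or line and never addresses the reverse inclusion for this theorem (surjectivity is argued carefully only in the later theorems on balls and half-planes). Your bookkeeping at the exceptional points is also sound: $w=0$ fails the derived equation outright (it reduces to $1=0$), consistent with the fact that $0 \in \iota(S(z_*,R))$ would force $\infty \in S(z_*,R)$; and when $0 \in S(z_*,R)$, the point $\iota(0)=\infty$ correctly explains why the image is a line through infinity. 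What the paper's approach retains in exchange is an explicit parametrization of the image curve, but as a proof of the stated theorem yours is the more complete argument.
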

\begin{proof}
We parametrize the circle $S(z_*,R)$ by
$$z = z_* + Re^{i\phi}, \hspace{0.4cm}\text{where}\hspace{0.4cm} \phi\in [0,2\pi].$$

\noindent
Let us observe that  
\begin{gather}
\frac{1}{z} = \frac{1}{z_*+Re^{i\phi}} = \frac{\overline{z_*}+Re^{-i\phi}}{|z_*+Re^{i\phi}|^2} = \frac{(x_*+R\cos(\phi)) - i(y_*+R\sin(\phi))}{(x_*+R\cos(\phi))^2 +(y_*+R\sin(\phi))^2},
\label{1przezokreg}
\end{gather}

\noindent
so we define $u,v : [0,2\pi]\longrightarrow\reals$ by
\begin{gather*}
u(\phi) := x_* + R\cos(\phi),\\
v(\phi) := y_* + R\sin(\phi).
\end{gather*}

\noindent
In the sequel, for convenience of the notation, we simply write $u$ and $v$, neglecting the dependence on the angle $\phi$. Equations (\ref{1przezokreg}) lead to
$$\iota(z) = \frac{1}{z} = \frac{u-iv}{u^2 + v^2}.$$

At first, we analyze the scenario, when $0\not\in S(z_*,R)$. This is equivalent to \mbox{$x_*^2+y_*^2 \neq R^2$}. Observe that for every $a,b\in\reals$ we have
\begin{gather}
\left(\frac{u}{u^2+v^2} - a\right)^2 + \left(\frac{-v}{u^2+v^2} - b\right)^2 = \frac{2bv-2au+1}{u^2+v^2} + a^2 + b^2.
\label{rownosciab}
\end{gather}

\noindent
We put
$$a = \frac{-x_*}{R^2 - (x_*^2+y_*^2)} \hspace{0.4cm}\text{and}\hspace{0.4cm} b = \frac{y_*}{R^2 - (x_*^2+y_*^2)},$$

\noindent
so that the numerator in the fraction on the right-hand side of (\ref{rownosciab}) has the following form:
\begin{equation}
\begin{split}
&2\cdot\frac{y_*}{R^2-(x_*^2+y_*^2)} \cdot v - 2\cdot \frac{-x_*}{R^2-(x_*^2+y_*^2)} \cdot u + 1 \\
&= \frac{2y_*(y_*+R\sin(\phi)) + 2x_*(x_*+R\cos(\phi)) + R^2-(x_*^2+y_*^2)}{R^2-(x_*^2+y_*^2)} \\
&= \frac{x_*^2 + y_*^2 + 2R\bigg(x_*\cos(\phi) + y_*\sin(\phi)\bigg) + R^2}{R^2-(x_*^2+y_*^2)} = \frac{u^2+v^2}{R^2-(x_*^2+y_*^2)}.
\end{split}
\label{2bv2au1}
\end{equation}

\noindent
By (\ref{rownosciab}) and (\ref{2bv2au1}) we gather that
\begin{gather*}
\left(\frac{u}{u^2+v^2} - \frac{-x_*}{R^2 - (x_*^2 + y_*^2)}\right)^2 + \left(\frac{-v}{u^2+v^2} - \frac{y_*}{R^2 - (x_*^2 + y_*^2)}\right)^2 \\
= \frac{1}{R^2-(x_*^2+y_*^2)} + \left(\frac{-x_*}{R^2-(x_*^2+y_*^2)}\right)^2 + \left(\frac{y_*}{R^2-(x_*^2+y_*^2)}\right)^2 = \left(\frac{R}{R^2-(x_*^2+y_*^2)}\right)^2,
\end{gather*}

\noindent
which proves the first part of the theorem.

As far as the second part of the theorem is concerned, we assume that $0\in S(z_*,R)$. This is equivalent to \mbox{$x_*^2+y_*^2 = R^2$}. Observe that for every $a,b,c\in\reals$, the following equalities hold:
\begin{equation}
\begin{split}
&a\frac{u}{u^2+v^2} + b\frac{-v}{u^2+v^2} + c \\
&= \frac{a(x_*+R\cos(\phi)) - b(y_*+R\sin(\phi)) + c\bigg(2R^2 + 2R(x_*\cos(\phi) + y_*\sin(\phi))\bigg)}{2R^2 + 2R\bigg(x_*\cos(\phi) + y_*\sin(\phi)\bigg)} \\
&=\frac{\bigg(ax_*-by_*+2cR^2\bigg) + (a + 2cx_*)R\cos(\phi) + (-b +2cy_*)R\sin(\phi)}{2R^2 + 2R\bigg(x_*\cos(\phi) + y_*\sin(\phi)\bigg).}
\end{split}
\label{aubvc}
\end{equation}

\noindent
For $a=x_*,\ b=-y_*,$ and $c = -\frac{1}{2},$ the equalities (\ref{aubvc}) imply the second part of the theorem, since the numerator vanishes. This concludes the proof. 
\end{proof}

In the sequel, $B(z_*,R),\ \overline{B}(z_*,R)$ and $S(z_*,R)$ denote the open ball, the closed ball and the circle centered at $z_*$ with radius $R>0$, respectively. The next technical result will be crucial in further considerations. Intuitively, it says the if $0$ does not belong to the ball $\overline{B}(z_*,R)$, then the interior of the original ball is mapped into the interior of the image ball. If, however, $0$ lies inside the ball $B(z_*,R)$, then the interior of the original ball is mapped into the exterior of the image ball.

\begin{lemma}
Let $z\in B(z_*,R)$. 
\begin{itemize}
	\item If $0\not\in \overline{B}(z_*,R)$, then 
	$$\iota(z) \in B\left(\frac{-x_*}{R^2-(x_*^2+y_*^2)}+i\frac{y_*}{R^2-(x_*^2+y_*^2)},\frac{R}{R^2-(x_*^2+y_*^2)}\right).$$
	\item If $0\in B(z_*,R)$, then 
	$$\iota(z) \not\in \overline{B}\left(\frac{-x_*}{R^2-(x_*^2+y_*^2)}+i\frac{y_*}{R^2-(x_*^2+y_*^2)},\frac{R}{R^2-(x_*^2+y_*^2)}\right).$$
\end{itemize}
\label{ballispreserved}
\end{lemma}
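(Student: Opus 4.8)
The plan is to reuse the algebraic identity (\ref{rownosciab}) established during the proof of Theorem \ref{imageofcircle}, noting that it never used that $z$ lay on the circle: it is valid for every $z = u + iv \neq 0$ and all $a,b \in \reals$. First I would take an arbitrary $z = u + iv \in B(z_*,R)$ and recall that $\iota(z) = \frac{u-iv}{u^2+v^2}$, so $\Real\,\iota(z) = \frac{u}{u^2+v^2}$ and $\Imag\,\iota(z) = \frac{-v}{u^2+v^2}$. With the very same choice $a = \frac{-x_*}{R^2-(x_*^2+y_*^2)}$ and $b = \frac{y_*}{R^2-(x_*^2+y_*^2)}$ as in the theorem, the left-hand side of (\ref{rownosciab}) is exactly $|\iota(z)-w_*|^2$, where $w_*$ is the center of the candidate image ball named in the lemma. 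Hence (\ref{rownosciab}) hands us the squared distance from $\iota(z)$ to $w_*$ at no cost.

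Writing $D := R^2-(x_*^2+y_*^2)$ and letting $\rho^2 := \frac{R^2}{D^2}$ be the squared radius of the image ball, the heart of the argument is to subtract $\rho^2$ from both sides of (\ref{rownosciab}) and simplify. Since $a^2+b^2-\rho^2 = \frac{x_*^2+y_*^2-R^2}{D^2} = -\frac{1}{D}$, and the chosen $a,b$ turn the numerator $2bv-2au+1$ into $\frac{2x_*u+2y_*v+D}{D}$, the expected outcome is the clean formula
$$|\iota(z)-w_*|^2 - \rho^2 = \frac{R^2 - |z-z_*|^2}{D\,(u^2+v^2)},$$
where the numerator collapses to $R^2 - (u-x_*)^2 - (v-y_*)^2 = R^2 - |z-z_*|^2$ after expanding and invoking the definition of $D$. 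This completing-the-square step is the single routine computation that must be checked carefully.

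With that formula in hand the conclusion follows from a sign count. Because $z \in B(z_*,R)$ we have $R^2 - |z-z_*|^2 > 0$, and $u^2+v^2 = |z|^2 > 0$, so the sign of $|\iota(z)-w_*|^2 - \rho^2$ matches the sign of $\frac{1}{D}$. In the first case $0 \not\in \overline{B}(z_*,R)$ forces $x_*^2+y_*^2 > R^2$, hence $D < 0$ and the expression is negative, i.e. $\iota(z)$ lies strictly inside the image ball (here the radius $\frac{R}{D}$ written in the lemma is negative and must be read as $\frac{R}{|D|}$, which is precisely $\rho$). In the second case $0 \in B(z_*,R)$ forces $x_*^2+y_*^2 < R^2$, hence $D > 0$ and the expression is positive, so $\iota(z)$ lies strictly outside the closed image ball; the degenerate possibility $z = 0$ is harmless, since then $\iota(z) = \infty$ lies outside every ball. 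The main obstacle is thus not conceptual but bookkeeping: tracking the sign of $D$ correctly and reconciling the lemma's signed radius with the genuine positive radius $\rho$.
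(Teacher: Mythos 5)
Your proof is correct and takes essentially the same route as the paper: both arguments complete the square to compare $|\iota(z)-w_*|^2$ with $\left(\frac{R}{R^2-(x_*^2+y_*^2)}\right)^2$ and let the sign of $D=R^2-(x_*^2+y_*^2)$ decide the two cases, and your exact identity $|\iota(z)-w_*|^2-\rho^2=\frac{R^2-|z-z_*|^2}{D(u^2+v^2)}$ is exactly the paper's computation (\ref{iotazgdziejest}) combined with the membership criterion (\ref{r2wiekszeniz}), merely factored into a single formula (with the harmless observation that (\ref{rownosciab}) holds for all $(u,v)\neq(0,0)$, not just points on the circle). In two small respects you are more careful than the paper: you flag that the stated radius $\frac{R}{D}$ is negative when $D<0$ and must be read as $\frac{R}{|D|}$, and you dispose of the degenerate point $z=0$ (possible in the second case) via $\iota(0)=\infty$, which the paper passes over silently.
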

\begin{proof}
Since
$$\iota(z) = \frac{x}{x^2+y^2} + i\frac{-y}{x^2+y^2},$$

\noindent 
then
\begin{equation}
\begin{split}
&\left(\frac{x}{x^2+y^2} - \frac{-x_*}{R^2-(x_*^2+y_*^2)}\right)^2 + \left(\frac{-y}{x^2+y^2} - \frac{y_*}{R^2-(x_*^2+y_*^2)}\right)^2 \\
&= \frac{\bigg(x(R^2-(x_*^2+y_*^2)) + x_*(x^2+y^2)\bigg)^2 + \bigg(y(R^2-(x_*^2+y_*^2)) + y_*(x^2+y^2)\bigg)^2}{(x^2+y^2)^2\bigg(R^2 - (x_*^2+y_*^2)\bigg)^2}\\
&= \frac{ (x^2+y^2)\bigg(R^2-(x_*^2+y_*^2)\bigg)^2 + 2(x^2+y^2)\bigg(R^2-(x_*^2+y_*^2)\bigg)(xx_*+yy_*) + (x_*^2+y_*^2)(x^2+y^2)^2}{(x^2+y^2)^2\bigg(R^2 - (x_*^2+y_*^2)\bigg)^2} \\
&= \frac{ \bigg(R^2-(x_*^2+y_*^2)\bigg)^2 + 2\bigg(R^2-(x_*^2+y_*^2)\bigg)(xx_*+yy_*) + (x_*^2+y_*^2)(x^2+y^2)}{(x^2+y^2)\bigg(R^2 - (x_*^2+y_*^2)\bigg)^2}.
\end{split}
\label{iotazgdziejest}
\end{equation}

\noindent
Let us observe that $z = x+iy\in B(z_*,R)$ is equivalent to 
\begin{gather}
R^2 > (x-x_*)^2 + (y-y_*)^2 \ \Longleftrightarrow \ \bigg(R^2-(x_*^2+y_*^2)\bigg) + 2(xx_*+yy_*) > x^2+y^2.
\label{r2wiekszeniz}
\end{gather}

\begin{itemize}
	\item At first, we consider the situation when $0\not\in \overline{B}(z_*,R)$. Consequently, \mbox{$R^2-(x_*^2+y_*^2) < 0$} which implies that (\ref{r2wiekszeniz}) is equivalent to
\begin{gather*}
\bigg(R^2-(x_*^2+y_*^2)\bigg)^2 + 2\bigg(R^2-(x_*^2+y_*^2)\bigg)(xx_*+yy_*) < \bigg(R^2-(x_*^2+y_*^2)\bigg)(x^2+y^2) \\
\Longleftrightarrow \ \bigg(R^2-(x_*^2+y_*^2)\bigg)^2 + 2\bigg(R^2-(x_*^2+y_*^2)\bigg)(xx_*+yy_*) + (x_*^2+y_*^2)(x^2+y^2) < (x^2+y^2)R^2.
\end{gather*}

\noindent
Using the above estimate to equalities (\ref{iotazgdziejest}), we conclude that 
\begin{gather*}
\left(\frac{x}{x^2+y^2} - \frac{-x_*}{R^2-(x_*^2+y_*^2)}\right)^2 + \left(\frac{-y}{x^2+y^2} - \frac{y_*}{R^2-(x_*^2+y_*^2)}\right)^2 < \left(\frac{R}{R^2-(x_*^2+y_*^2)}\right)^2,
\end{gather*}

\noindent
which proves the first part of the theorem.

	\item If $0\in B(z_*,R)$ then $R^2 - (x_*^2+y_*^2) > 0$ and the reasoning is analogous.
\end{itemize}

\end{proof}

\begin{thm}
\noindent
\begin{itemize}
	\item If $0\not\in \overline{B}(z_*,R)$ then 
	$$\iota(B(z_*,r)) = B\left(\frac{-x_*}{R^2-(x_*^2+y_*^2)}+i\frac{y_*}{R^2-(x_*^2+y_*^2)},\frac{R}{R^2-(x_*^2+y_*^2)}\right).$$ 
	\item If $0 \in B(z_*,R)$ then 
	$$\iota(B(z_*,r)) = \overline{\complex} \backslash \overline{B}\left(\frac{-x_*}{R^2-(x_*^2+y_*^2)}+i\frac{y_*}{R^2-(x_*^2+y_*^2)},\frac{R}{R^2-(x_*^2+y_*^2)}\right).$$ 
\end{itemize}
\end{thm}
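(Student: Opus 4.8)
The plan is to read the inclusion ``$\subseteq$'' straight off Lemma \ref{ballispreserved}, and to obtain the reverse inclusion ``$\supseteq$'' from the fact that $\iota$ is an \emph{involution} of the Riemann sphere, combined with a connectedness argument that disposes of both cases at once. Throughout I write $D := R^2-(x_*^2+y_*^2)$, I let $B'$ be the ball occurring on the right-hand side of Lemma \ref{ballispreserved}, that is the ball with centre $w_* = \frac{-x_*}{D}+i\frac{y_*}{D}$ and radius $\frac{R}{|D|}$, and I let $S'$ be the circle with that same centre and radius.

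First I would record that $\iota$ is a homeomorphism of $\overline{\complex}$: it is continuous, and the identity $\iota(\iota(z)) = z$ shows that it is its own inverse, hence a continuous bijection with continuous inverse. Next, in both cases $0\notin S(z_*,R)$ (in the first case $0$ lies outside $\overline{B}(z_*,R)$, in the second strictly inside $B(z_*,R)$), so Theorem \ref{imageofcircle} applies and gives $\iota(S(z_*,R)) = S'$. Restricting $\iota$ therefore yields a homeomorphism of $\overline{\complex}\setminus S(z_*,R)$ onto $\overline{\complex}\setminus S'$.

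The heart of the matter is then a separation argument. A circle splits $\overline{\complex}$ into exactly two connected components: for $S(z_*,R)$ these are $B(z_*,R)$ and $\overline{\complex}\setminus\overline{B}(z_*,R)$, while for $S'$ they are $B'$ and $\overline{\complex}\setminus\overline{B}'$. Since a homeomorphism carries connected components onto connected components, $\iota(B(z_*,R))$ must coincide with one of $B'$ or $\overline{\complex}\setminus\overline{B}'$. Deciding which is exactly what Lemma \ref{ballispreserved} accomplishes: when $0\notin\overline{B}(z_*,R)$ the lemma forces $\iota(B(z_*,R))\subseteq B'$ and hence $\iota(B(z_*,R)) = B'$, whereas when $0\in B(z_*,R)$ it forces $\iota(B(z_*,R))\subseteq\overline{\complex}\setminus\overline{B}'$ and hence equality there.

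The step requiring the most care is the separation claim itself, namely that a genuine (non-degenerate) circle cuts $\overline{\complex}$ into precisely two regions and that $\iota$ respects this decomposition; once this is granted, Lemma \ref{ballispreserved} merely selects the correct region. A reader wishing to avoid topology can replace this step by a direct computation: repeating the calculation (\ref{iotazgdziejest}) for a point $z$ in the exterior $\overline{\complex}\setminus\overline{B}(z_*,R)$ reverses the inequality (\ref{r2wiekszeniz}) and shows that the exterior is sent into the region complementary to the one in Lemma \ref{ballispreserved}; since $\iota$ is a bijection carrying the three disjoint pieces $B(z_*,R)$, $S(z_*,R)$, $\overline{\complex}\setminus\overline{B}(z_*,R)$ onto the three disjoint pieces $B'$, $S'$, $\overline{\complex}\setminus\overline{B}'$, a count of the pieces then delivers the stated equalities. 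I would nevertheless favour the connectedness route, since it treats both cases simultaneously and keeps the bookkeeping for the point $\infty$ implicit.
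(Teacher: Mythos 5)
Your proposal is correct, but it proves the reverse inclusion by a genuinely different route than the paper. The paper's proof is entirely computational: given $u+iv$ in the target ball, it explicitly constructs the candidate preimage $x=\frac{u}{u^2+v^2},\ y=\frac{-v}{u^2+v^2}$ (tacitly using the same involution property you invoke), and then verifies $x+iy\in B(z_*,R)$ through the chain of algebraic equivalences (\ref{estimateupsidedown}) and the computation (\ref{xminusxstar2yminusystar2}); the second case is dismissed as ``analogous.'' You instead argue topologically: $\iota$ is a self-inverse homeomorphism of $\overline{\complex}$, Theorem \ref{imageofcircle} identifies $\iota(S(z_*,R))$ with the circle $S'$, a circle separates $\overline{\complex}$ into exactly two components, and Lemma \ref{ballispreserved} selects which component $\iota(B(z_*,R))$ must be. Your approach buys uniformity (both cases at once), an honest treatment of the point $\infty$ --- which the paper's second case, where $\iota(0)=\infty$, never addresses explicitly --- and no further computation beyond what is already proved; its cost is the topological input (the two-component separation fact, elementary for circles but still an appeal to topology) and a mild reliance on reading Theorem \ref{imageofcircle} as an \emph{onto} statement, whereas the paper's proof of that theorem, strictly speaking, only verifies that image points satisfy the circle equation. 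Your fallback counting argument neatly closes that soft spot: with the three disjoint pieces $B(z_*,R)$, $S(z_*,R)$, $\overline{\complex}\setminus\overline{B}(z_*,R)$ mapped by the bijection $\iota$ into the three disjoint pieces $B'$, $S'$, $\overline{\complex}\setminus\overline{B'}$ (the exterior computation being the reversal of (\ref{r2wiekszeniz}) in (\ref{iotazgdziejest})), surjectivity of $\iota$ forces each inclusion to be an equality, with no onto-ness of the circle map needed. Either variant of your argument is sound and arguably cleaner than the paper's, at the price of being less self-contained.
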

\begin{proof}
As in Lemma \ref{ballispreserved}, the argument in both cases bears a striking resemblance. Hence, we focus solely on the first part of the theorem. By Lemma \ref{ballispreserved}, we already know the following inclusion:
$$\iota(B(z_*,r)) \subset B\left(\frac{-x_*}{R^2-(x_*^2+y_*^2)}+i\frac{y_*}{R^2-(x_*^2+y_*^2)},\frac{R}{R^2-(x_*^2+y_*^2)}\right).$$

\noindent
We will prove the reverse inclusion. 

Let the point $u+iv$ belong to an open ball
$$B\left(\frac{-x_*}{R^2-(x_*^2+y_*^2)}+i\frac{y_*}{R^2-(x_*^2+y_*^2)},\frac{R}{R^2-(x_*^2+y_*^2)}\right).$$

\noindent
We put
$$x = \frac{u}{u^2+v^2} \hspace{0.4cm}\text{and}\hspace{0.4cm} y = \frac{-v}{u^2+v^2}.$$

\noindent
Consequently, we have
$$\iota(x+iy) = \frac{x}{x^2+y^2} + i\frac{-y}{x^2+y^2} = u+iv,$$

\noindent
and it remains to establish that $x+iy \in B(z_*,R)$. 

Recall that we are dealing with the situation when $0\not\in B(z_*,R)$, i.e. \mbox{$R^2-(x_*^2+y_*^2)<0$}. Let us observe the following estimates
\begin{equation}
\begin{split}
&\left(\frac{R}{R^2 - (x_*^2+y_*^2)}\right)^2 > \left(u-\frac{-x_*}{R^2-(x_*^2+y_*^2)}\right)^2 + \left(v - \frac{y_*}{R^2-(x_*^2+y_*^2)}\right)^2 \\
&\Longleftrightarrow \ \frac{1}{R^2 - (x_*^2+y_*^2)} + \frac{x_*^2+y_*^2}{\bigg(R^2 - (x_*^2+y_*^2)\bigg)^2} > \left(u-\frac{-x_*}{R^2-(x_*^2+y_*^2)}\right)^2 + \left(v - \frac{y_*}{R^2-(x_*^2+y_*^2)}\right)^2 \\
&\Longleftrightarrow \ \frac{1}{R^2 - (x_*^2+y_*^2)} > u^2 -  2u\frac{-x_*}{R^2 - (x_*^2+y_*^2)}+v^2 - 2v\frac{y_*}{R^2 - (x_*^2+y_*^2)} \\
&\Longleftrightarrow \ \frac{1}{R^2 - (x_*^2+y_*^2)} + 2u\frac{-x_*}{R^2 - (x_*^2+y_*^2)} + 2v\frac{y_*}{R^2 - (x_*^2+y_*^2)} > u^2+v^2 \\
&\Longleftrightarrow \ 1 - 2ux_* + 2vy_* < \bigg(R^2 - (x_*^2+y_*^2)\bigg)(u^2+v^2) \\
&\Longleftrightarrow \ 1 - 2ux_* + 2vy_* +(x_*^2+y_*^2)(u^2+v^2) < R^2(u^2+v^2).  
\end{split}
\label{estimateupsidedown}
\end{equation}

\noindent
Consequently, we conlcude that
\begin{equation}
\begin{split}
&(x-x_*)^2 + (y-y_*)^2 = \left(\frac{u}{u^2+v^2}-x_*\right)^2 + \left(\frac{v}{u^2+v^2}-y_*\right)^2\\
&= \left(\frac{u-x_*(u^2+v^2)}{u^2+v^2}\right)^2 + \left(\frac{v-y_*(u^2+v^2)}{u^2+v^2}\right)^2 \\
&= \frac{(u^2+v^2) + 2(vy_*-ux_*)(u^2+v^2) + (x_*^2+y_*^2)(u^2+v^2)^2}{(u^2+v^2)^2} \\
&= \frac{1 + 2(vy_*-ux_*) + (x_*^2+y_*^2)(u^2+v^2)}{u^2+v^2} \stackrel{(\ref{estimateupsidedown})}{<} R^2,
\end{split}
\label{xminusxstar2yminusystar2}
\end{equation}

\noindent
which ends the first part of the theorem. As we remarked earlier, the reasoning behind the second part is completely analogous.
\end{proof}

In view of the above theorem, it is a trivial observation that
$$\iota(\overline{B}(z_*,r)) = \overline{B}\left(\frac{-x_*}{R^2-(x_*^2+y_*^2)}+i\frac{y_*}{R^2-(x_*^2+y_*^2)},\frac{R}{R^2-(x_*^2+y_*^2)}\right) \hspace{0.4cm}\text{if}\hspace{0.4cm} 0\not\in \overline{B}(z_*,R),$$

\noindent
and  
$$\iota(\overline{B}(z_*,r)) = \overline{\complex} \backslash B\left(\frac{-x_*}{R^2-(x_*^2+y_*^2)}+i\frac{y_*}{R^2-(x_*^2+y_*^2)},\frac{R}{R^2-(x_*^2+y_*^2)}\right) \hspace{0.4cm}\text{if}\hspace{0.4cm} 0 \in B(z_*,R).$$

Up to this point, we did not allow $0$ to lay on the boundary of the circle. The next result investigates exactly that scenario. 

\begin{thm}
If $0\in S(z_*,R)$, then $\iota(B(z_*,R))$ is an open half-plane, described by the inequality 
\begin{gather}
x_*x - y_*y > \frac{1}{2}.
\label{xstarxystary}
\end{gather}
\end{thm}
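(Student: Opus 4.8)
The plan is to exploit the hypothesis $0\in S(z_*,R)$ in its equivalent form $x_*^2+y_*^2=R^2$ (already recorded in the proof of Theorem \ref{imageofcircle}), because this identity collapses the quadratic inequality defining the ball into the linear inequality defining the half-plane. Observe first that $0$ lies on the boundary circle, so $0\not\in B(z_*,R)$; consequently $\iota$ is defined and takes nonzero values at every point of the open ball, and we may divide by $|z|^2$ without hesitation. Rather than treating the two inclusions separately, I would establish one chain of equivalences valid for all $z\in\complex\setminus\{0\}$ and then read off the set equality from it, using that $\iota$ is an involution on $\complex\setminus\{0\}$.

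First I would fix $z=s+it\in\complex\setminus\{0\}$ and write $\iota(z)=x+iy$, where
\begin{gather*}
x=\frac{s}{s^2+t^2},\qquad y=\frac{-t}{s^2+t^2}.
\end{gather*}
A direct computation yields the identity $x_*x-y_*y=\dfrac{x_*s+y_*t}{s^2+t^2}$. Expanding the membership condition $(s-x_*)^2+(t-y_*)^2<R^2$ and cancelling $x_*^2+y_*^2$ against $R^2$ reduces it to $s^2+t^2<2(x_*s+y_*t)$; dividing by $2(s^2+t^2)>0$ and applying the identity gives
\begin{gather*}
z\in B(z_*,R)\quad\Longleftrightarrow\quad x_*x-y_*y>\frac{1}{2}.
\end{gather*}

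It then remains only to convert this equivalence into the claimed set equality. The inclusion $\iota(B(z_*,R))\subset\{x_*x-y_*y>\tfrac12\}$ follows at once from the left-to-right implication. For the reverse inclusion I would start from an arbitrary $w=x+iy$ satisfying $x_*x-y_*y>\tfrac12$; this inequality forces $w\neq 0$, so $z:=\iota(w)$ is well defined, nonzero, and satisfies $\iota(z)=w$ because $\iota$ is an involution. The right-to-left implication then places $z$ in $B(z_*,R)$, whence $w\in\iota(B(z_*,R))$. The two sets therefore coincide, and the resulting set is visibly the open half-plane bounded by the line $x_*x-y_*y=\tfrac12$, consistently with Theorem \ref{imageofcircle}.

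I expect the difficulty here to be bookkeeping rather than conceptual depth: the points requiring care are confirming that $0\not\in B(z_*,R)$ so that no denominator vanishes, noting that the half-plane inequality automatically excludes the origin, and checking that each step in the reduction $(s-x_*)^2+(t-y_*)^2<R^2\Leftrightarrow s^2+t^2<2(x_*s+y_*t)$ is a genuine equivalence rather than a one-way implication. Because every manipulation is reversible, the usual ``the other case is analogous'' remark is unnecessary and both inclusions drop out of the single computation.
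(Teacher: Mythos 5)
Your proof is correct and follows essentially the same route as the paper: both expand the ball inequality $(x-x_*)^2+(y-y_*)^2<R^2$, cancel $R^2$ against $x_*^2+y_*^2$, and use the substitution $x=\frac{u}{u^2+v^2},\ y=\frac{-v}{u^2+v^2}$ (i.e.\ the involution property of $\iota$) for the reverse inclusion. Your only departure is organizational --- you package the two inclusions into a single chain of equivalences and invoke reversibility, where the paper carries out the reverse-direction computation explicitly --- which is a harmless streamlining of the same argument.
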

\begin{proof}
Let $x+iy\in B(z_*,R)$. Since 
$$x_*^2+y_*^2 = R^2 > (x-x_*)^2 + (y-y_*)^2,$$

\noindent
then 
\begin{gather*}
2x_*x + 2y_*y > x^2 + y^2 \ \Longleftrightarrow \ x_*\frac{x}{x^2 + y^2} - y_*\frac{-y}{x^2 + y^2} > \frac{1}{2}. 
\end{gather*}

\noindent
This proves that $\iota(B(z_*,R))$ is contained in the open half-plane (\ref{xstarxystary}). 

We will show that the whole open half-plane is the image of $B(z_*,R)$ under $\iota$. Let $u+iv$ belong to an open half-plane $(\ref{xstarxystary})$, meaning
\begin{gather}
x_*u-y_*v > \frac{1}{2} \ \Longleftrightarrow \ 1 - 2x_*u + 2y_*v <0.
\label{12xstaru2ystarv0} 
\end{gather}

\noindent
Put
$$x = \frac{u}{u^2+v^2} \hspace{0.4cm}\text{and}\hspace{0.4cm} y = \frac{-v}{u^2+v^2}.$$

\noindent
As in (\ref{xminusxstar2yminusystar2}), we gather that 
\begin{gather*}
(x-x_*)^2 + (y-y_*)^2 = \frac{1 + 2(vy_*-ux_*) + (x_*^2+y_*^2)(u^2+v^2)}{u^2+v^2} \\
\frac{1 + 2(vy_*-ux_*)}{u^2+v^2} + R^2\stackrel{(\ref{12xstaru2ystarv0})}{<} R^2,
\end{gather*}

\noindent
which concludes the proof. 
\end{proof}

It is now an easy remark that $\iota(\overline{B}(z_*,R))$ is the closed half-plane, described by the inequality
$$x_*x-y_*y \geq \frac{1}{2}.$$

\section{Inverting lines and half-planes}

We begin this chapter with inverting lines. For convenience, let us denote \mbox{$\overline{\complex^*} = \overline{\complex}\backslash\{0\}$}. As the result below shows, there are various scenarios to consider. 

\begin{thm}
Let $L$ be a line described by:
\begin{itemize}
	\item $y = ax + b,$ where $b\neq 0$, then $\iota(L)$ is a circle described by the equation
	$$\left(x+\frac{a}{2b}\right)^2 + \left(y+\frac{1}{2b}\right)^2 = \left(\frac{\sqrt{1+a^2}}{2b}\right)^2,$$
	
	\item $y = ax,$ then 
	$$h(L) = \bigg\{z\in \overline{\complex^*}\ :\ y=-ax\bigg\},$$
	\item $x = c\neq 0,$ then $h(L)$ is a circle described by the equation
	$$\left(x-\frac{1}{2c}\right)^2 + y^2 = \left(\frac{1}{2c}\right)^2,$$
	\item $x = 0,$ then 
	$$h(L) = \bigg\{z\in\overline{\complex^*}\ :\ x=0\bigg\}.$$
\end{itemize}
\end{thm}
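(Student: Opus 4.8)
The plan is to exploit the fact that $\iota$ is an involution: writing the image point as $u+iv = \iota(x+iy)$, one has $u = \frac{x}{x^2+y^2}$ and $v = \frac{-y}{x^2+y^2}$, and applying $\iota$ once more gives the inverse relations $x = \frac{u}{u^2+v^2}$ and $y = \frac{-v}{u^2+v^2}$. Every line appearing in the statement can be written in the single form $\alpha x + \beta y = \gamma$ with $(\alpha,\beta)\neq(0,0)$, and the origin lies on the line precisely when $\gamma = 0$. Substituting the inverse relations into this equation and clearing the denominator $u^2+v^2$ yields the identity
\begin{gather*}
\alpha u - \beta v = \gamma\,(u^2+v^2),
\end{gather*}
which I would use as the backbone for all four cases.

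First I would treat the two cases in which $0$ does not lie on $L$, i.e. $\gamma\neq 0$. Here the backbone identity becomes $u^2+v^2 - \frac{\alpha}{\gamma}u + \frac{\beta}{\gamma}v = 0$, and completing the square in $u$ and $v$ produces a circle. For $y=ax+b$ one reads off $(\alpha,\beta,\gamma)=(a,-1,-b)$, and for $x=c$ one reads off $(\alpha,\beta,\gamma)=(1,0,c)$; substituting these values and simplifying the radius gives exactly the two stated equations, with radii $\frac{\sqrt{1+a^2}}{2b}$ and $\frac{1}{2c}$ respectively. A short check shows that $(u,v)=(0,0)$ satisfies each circle equation, which matches the fact that the point $\infty\in L$ is sent by $\iota$ to $0$; this is what makes the image the \emph{entire} circle rather than a punctured one.

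Next I would treat the cases $y=ax$ and $x=0$, for which $\gamma=0$. Now the backbone identity degenerates to the linear equation $\alpha u - \beta v = 0$, a line through the origin: for $y=ax$ it reads $v=-au$ (the line $y=-ax$), and for $x=0$ it reads $u=0$ (the line $x=0$). Since $0\in L$ is sent to $\infty$ and $\infty\in L$ is sent back to $0$, the finite nonzero points of $L$ map onto the finite nonzero points of the image line; expressing the image inside $\overline{\complex^*}=\overline{\complex}\backslash\{0\}$ accounts for exactly this, and yields the two stated sets.

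The computations above are reversible at every step, since each substitution is an equivalence, so the bijectivity of $\iota$ guarantees that the image is precisely the stated circle or line and not merely a subset of it. The only genuinely delicate point, and the one I would be most careful about, is the bookkeeping of the distinguished points $0$ and $\infty$: verifying that $0$ lands on the image circle in the non-degenerate cases, and correctly excluding $0$ while retaining $\infty$ in the degenerate cases. The completion of the square is routine and mirrors the computation already carried out in Theorem \ref{imageofcircle}.
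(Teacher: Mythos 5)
Your proof is correct, but it takes a genuinely different route from the paper. The paper proves each of the four cases by a separate forward computation: it parametrizes $L$, writes out $\iota(z)$ explicitly, substitutes the resulting coordinates into the \emph{already-known} circle or line equation, and verifies the identity by brute-force expansion (in the case $y=ax+b$ this means expanding a quartic in $x$ and watching it collapse to $\frac{1+a^2}{4b^2}$). You instead exploit the involution $\iota^{-1}=\iota$ to substitute $x = \frac{u}{u^2+v^2}$, $y=\frac{-v}{u^2+v^2}$ into the unified line equation $\alpha x+\beta y = \gamma$, obtaining $\alpha u - \beta v = \gamma(u^2+v^2)$ and then completing the square. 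This buys you three things the paper's argument does not deliver: all four cases are handled by one computation (the dichotomy circle versus line is just $\gamma\neq 0$ versus $\gamma=0$); the image equation is \emph{derived} rather than verified, so no prior knowledge of the answer is needed; and, most substantively, because every substitution is an equivalence you get genuine set equality $\iota(L)=$ (circle or line), whereas the paper's expansions strictly establish only the inclusion $\iota(L)\subseteq$ (circle or line) and leave surjectivity implicit. Your explicit bookkeeping of the points $0$ and $\infty$ — checking that $(u,v)=(0,0)$ satisfies the circle equation so that $\iota(\infty)=0$ fills in the full circle when $\gamma\neq 0$, and that $0$ is excluded while $\infty$ is retained when $\gamma=0$ — is also more careful than the paper, which passes over these points silently. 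Ironically, your substitution is exactly the device the paper itself uses later for half-planes (the definition of $x_*,y_*$ from $u_*,v_*$ in the proof of Theorem \ref{halfplanewithoutzero}), so your proof is arguably more in the spirit of the paper's later sections than its own proof of this theorem.
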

\begin{proof}
At first, suppose that $L$ is described by the equation $y=ax+b$. This implies that if $z\in L$ then 
\begin{gather*}
\iota(z) = \frac{1}{x+i(ax+b)} = \frac{x}{x^2+(ax+b)^2} + i\frac{-(ax+b)}{x^2+(ax+b)^2}.
\end{gather*}

\noindent
Let us observe that 
\begin{gather*}
\left(\frac{x}{x^2+(ax+b)^2} + \frac{a}{2b}\right)^2 + \left(\frac{ax+b}{x^2+(ax+b)^2} - \frac{1}{2b}\right)^2 \\
= \frac{\bigg(2bx+a(x^2+(ax+b)^2)\bigg)^2 + \bigg(2b(ax+b) - x^2 - (ax+b)^2\bigg)^2}{4b^2\bigg(x^2+(ax+b)^2\bigg)^2} \\
= \frac{\bigg(a(1+a^2)x^2 + 2(1+a^2)bx + ab^2\bigg)^2 + \bigg(-(1+a^2)x^2 +b^2\bigg)^2}{4b^2\bigg(x^2+(ax+b)^2\bigg)^2} \\
= \frac{(1+a^2)\bigg((1+a^2)^2x^4 + 4a(1+a^2)bx^3 + 2(1+3a^2)b^2x^2 + 4ab^3x + b^4\bigg)}{4b^2\bigg(x^2+(ax+b)^2\bigg)^2} = \frac{1+a^2}{4b^2},
\end{gather*}

\noindent
which proves the first part of the theorem. 

In order to prove the second part of the theorem, let $L$ be described by the equation $y = ax$. Consequently, we have 
$$\iota(z) = \frac{1}{x+iax} = \frac{x-iax}{(1+a^2)x^2} = \frac{1}{(1+a^2)x} + i\frac{-a}{(1+a^2)x}$$

\noindent
and it remains to remark that 
$$a\cdot \frac{1}{(1+a^2)x} + 1\cdot \frac{-a}{(1+a^2)x} = 0.$$

For the third part of the theorem, let $L$ be described by the equation $x = c \neq 0$. Hence, we gather that 
$$\iota(z) = \frac{1}{c+iy} = \frac{c-iy}{c^2+y^2} = \frac{c}{c^2+y^2} + i\frac{-y}{c^2+y^2},$$

\noindent
which implies that 
\begin{gather*}
\left(\frac{c}{c^2+y^2} - \frac{1}{2c}\right)^2 + \left(\frac{-y}{c^2+y^2}\right)^2 = \left(\frac{c^2-y^2}{2c(c^2+y^2)}\right)^2 + \frac{y^2}{c^2+y^2} \\
= \frac{(c^2-y^2)^2 + 4c^2y^2}{4c^2(c^2+y^2)^2} = \frac{1}{4c^2} = \left(\frac{1}{2c}\right)^2.
\end{gather*}

\noindent
This ends the third part of the theorem. 

Last but not least, assume that $L$ is described by the equation $x=0$. We may thus parametrize it by $z = iy,$ where $y\in\reals$. Consequently, we have 
$$\iota(z) = \frac{1}{iy} = i\frac{-1}{y},$$

\noindent
which ends the proof. 
\end{proof}

Before we proceed with establishing the images of open half-planes, we require two technical lemmas, which we present below.

\begin{lemma}
Let $z_*$ belong to an open half-plane $P$ described either by the inequality \mbox{$y>ax+b$}, $b>0$ or the inequality $y<ax+b,\ b<0$. Then 
$$\iota(z_*) \in B\left(\frac{-a}{2b} + i\frac{-1}{2b}, \frac{\sqrt{a^2+1}}{2b}\right).$$
\label{halfplanePeitheror}
\end{lemma}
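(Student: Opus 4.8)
The plan is to verify the membership $\iota(z_*) \in B$ directly, by computing the squared distance from $\iota(z_*)$ to the center of the target ball and comparing it with the squared radius. Writing $z_* = x_* + iy_*$ and $\rho := x_*^2 + y_*^2$, I recall that $\iota(z_*) = \frac{x_*}{\rho} + i\frac{-y_*}{\rho}$. First I would note that in either admissible configuration the point $0$ fails to lie in $P$: substituting $z=0$ into $y>ax+b$ forces $0>b$, contradicting $b>0$, and into $y<ax+b$ forces $0<b$, contradicting $b<0$. Hence $z_*\neq 0$, so $\rho>0$ and the computation is legitimate. It is worth remarking that the boundary circle of the target ball is precisely the image $\iota(L)$ of the line $L:\ y=ax+b$ computed in the previous theorem, so this lemma is the half-plane analogue of Lemma \ref{ballispreserved}: the interior of the admissible half-plane ought to land in the interior of the image circle.

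Next I would expand
$$\left(\frac{x_*}{\rho} + \frac{a}{2b}\right)^2 + \left(\frac{-y_*}{\rho} + \frac{1}{2b}\right)^2$$
and simplify using $\left(\frac{x_*}{\rho}\right)^2 + \left(\frac{y_*}{\rho}\right)^2 = \frac{1}{\rho}$. The cross terms collapse to $\frac{ax_* - y_*}{b\rho}$ and the constant terms to $\frac{a^2+1}{4b^2}$, so the whole expression equals
$$\frac{1}{\rho} + \frac{ax_* - y_*}{b\rho} + \frac{a^2+1}{4b^2}.$$
Since $\frac{a^2+1}{4b^2}$ is exactly the squared radius $\left(\frac{\sqrt{a^2+1}}{2b}\right)^2$, the desired strict inequality $\iota(z_*)\in B$ reduces, after multiplying through by $\rho>0$, to the clean condition
$$1 + \frac{ax_* - y_*}{b} < 0.$$

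Finally I would clear the denominator $b$, and this is the only place demanding any care. Multiplication by $b$ is sign-sensitive: when $b>0$ the inequality is preserved and becomes $b + ax_* - y_* < 0$, that is $y_* > ax_* + b$; when $b<0$ the inequality reverses and becomes $b + ax_* - y_* > 0$, that is $y_* < ax_* + b$. In each case the resulting condition is exactly the hypothesis defining the admissible half-plane $P$, so the membership is established. This also explains, structurally, why the two cases $\{y>ax+b,\ b>0\}$ and $\{y<ax+b,\ b<0\}$ are bundled together in the statement: the sign of $b$ must match the direction of the strict inequality in order for the image to fall inside, rather than outside, the ball. The main (and essentially only) obstacle is thus bookkeeping the sign of $b$ correctly; the underlying algebra is routine.
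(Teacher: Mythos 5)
Your proof is correct and takes essentially the same approach as the paper's: both verify membership directly by computing the squared distance from $\iota(z_*)$ to the center $\frac{-a}{2b}+i\frac{-1}{2b}$, comparing it with the squared radius $\frac{a^2+1}{4b^2}$, and handling the sign of $b$ in the final case split. The only cosmetic difference is that the paper substitutes $y_*=ax_*+c$ with $c>b>0$ or $c<b<0$ and reduces to $4b^2<4bc$, while you keep $y_*$ and reduce to $1+\frac{ax_*-y_*}{b}<0$ --- the same computation in different clothing.
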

\begin{proof}
Since $z_* = x_*+iy_*$ belongs to a half-plane $P$, we know that $y=ax + c,$ where either $c > b > 0$ or $c < b < 0$. Since 
$$\frac{1}{z_*} = \frac{x_*}{x_*^2+(ax_*+c)^2} + i\frac{-(ax_*+c)}{x_*^2+(ax_*+c)^2}$$

\noindent
then
\begin{gather*}
\left(\frac{x_*}{x_*^2+(ax_*+c)^2} + \frac{a}{2b}\right)^2 + \left(\frac{-(ax_*+b)}{x_*^2+(ax_*+b)^2} + \frac{1}{2b}\right)^2 \\
= \left(\frac{2bx_* + a\bigg(x_*^2+(ax_*+c)^2\bigg)}{2b\bigg(x_*^2+(ax_*+c)^2\bigg)}\right)^2 + \left(\frac{-2b(ax_*+c) + \bigg(x_*^2+(ax_*+c)^2\bigg)}{2b\bigg(x_*^2+(ax_*+c)^2\bigg)}\right)^2 \\
= \frac{4b^2\bigg(x_*^2+(ax_*+c)^2\bigg) - 4bc\bigg(x_*^2+(ax_*+c)^2\bigg) + (1+a^2)\bigg(x_*^2+(ax_*+c)^2\bigg)^2}{4b^2\bigg(x_*^2+(ax_*+c)^2\bigg)^2} \\
= \frac{4b^2-4bc+(a^2+1)\bigg(x_*^2+(ax_*+c)^2\bigg)}{4b^2\bigg(x_*^2+(ax_*+c)^2\bigg)} < \frac{1+a^2}{4b^2}.
\end{gather*}

\noindent
The last inequality stems from $4b^2 < 4bc$, which is true in both scenarios: $c>b>0$ and $c<b<0$. This ends the proof.
\end{proof}

\begin{lemma}
Let $z_* = x_* + iy_*$ belong to an open half-plane $P$ described either by the inequality \mbox{$x>c>0$}, or the inequality $x<c<0$. Then 
$$\iota(z_*) \in B\left(\frac{1}{2c}, \frac{1}{2c}\right).$$
\label{iotazstarB12c12c}
\end{lemma}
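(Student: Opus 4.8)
The plan is to mirror the computation that proved Lemma \ref{halfplanePeitheror}, but now adapted to the vertical-line case $x=c$ rather than the slanted line $y=ax+b$. Since $z_* = x_* + iy_*$ lies in the half-plane $P$, there is a vertical line through $z_*$, namely $x = c'$ for some $c'$ satisfying either $c' > c > 0$ or $c' < c < 0$; writing $z_* = c' + iy_*$ parametrizes the point in a way that separates the controlled coordinate $c'$ from the free coordinate $y_*$.

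First I would compute $\iota(z_*)$ explicitly:
\begin{gather*}
\frac{1}{z_*} = \frac{1}{c' + iy_*} = \frac{c'}{c'^2 + y_*^2} + i\frac{-y_*}{c'^2+y_*^2}.
\end{gather*}
Next I would form the squared distance from $\iota(z_*)$ to the candidate center $\tfrac{1}{2c}$ (which sits on the real axis, so its imaginary part is $0$), that is,
\begin{gather*}
\left(\frac{c'}{c'^2+y_*^2} - \frac{1}{2c}\right)^2 + \left(\frac{-y_*}{c'^2+y_*^2}\right)^2,
\end{gather*}
and simplify it over the common denominator $4c^2(c'^2+y_*^2)^2$. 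After expanding the numerator, the cross terms should combine so that the expression reduces to a multiple of $c'^2+y_*^2$ in the numerator, collapsing to something of the form $\tfrac{c'^2+y_*^2 - \text{(linear in }c')}{4c^2(c'^2+y_*^2)}$; the goal is to show this is strictly less than $\left(\tfrac{1}{2c}\right)^2 = \tfrac{1}{4c^2}$.

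The main obstacle — though a mild one — is verifying that the final inequality is driven entirely by the defining inequality of $P$. After clearing the positive denominator $4c^2(c'^2+y_*^2)^2$, the desired bound should reduce to an inequality comparing $c c'$ with $c'^2 + y_*^2$ (up to sign), and this in turn should follow from $4cc' > 4c^2$, i.e. $cc' > c^2$, which holds precisely because $c' > c > 0$ or $c' < c < 0$ (in both cases $c$ and $c'$ share a sign and $|c'| > |c|$, so $cc' > c^2 > 0$). This is the exact analogue of the step in Lemma \ref{halfplanePeitheror} where the conclusion rested on $4b^2 < 4bc$ holding in both sign regimes, so I expect no genuine difficulty beyond careful bookkeeping of the algebra and tracking the direction of the inequality through each equivalence.
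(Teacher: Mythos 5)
Your proposal is correct and follows essentially the same route as the paper: writing $z_*=c'+iy_*$ (the paper keeps the name $x_*$), computing the squared distance from $\iota(z_*)$ to $\tfrac{1}{2c}$ over the common denominator $4c^2(c'^2+y_*^2)^2$, and reducing the strict bound to $4cc'>4c^2$, which holds in both sign regimes exactly as you argue. Your passing remark that the bound compares $cc'$ with $c'^2+y_*^2$ is slightly off (the $c'^2+y_*^2$ terms cancel), but you immediately state the correct driving inequality $cc'>c^2$, so the argument goes through as planned.
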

\begin{proof}
Since 
$$\iota(z_*) = \frac{x_*}{x_*^2+y_*^2} + i \frac{-y_*}{x_*^2+y_*^2},$$

\noindent
then
\begin{gather*}
\left(\frac{x_*}{x_*^2+y_*^2}-\frac{1}{2c}\right)^2 + \left(\frac{-y_*}{x_*^2+y_*^2}\right)^2 = \left(\frac{2cx_*-(x_*^2+y_*^2)}{2c(x_*^2+y_*^2)}\right)^2 + \frac{y_*^2}{(x_*^2+y_*^2)^2} \\
= \frac{4c^2(x_*^2+y_*^2) - 4cx_*(x_*^2+y_*^2) + (x_*^2+y_*^2)^2}{4c^2(x_*^2+y_*^2)^2} = \frac{4c^2-4cx_* + (x_*^2+y_*^2)}{4c^2(x_*^2+y_*^2)} < \frac{1}{4c^2}.
\end{gather*}

\noindent
The last inequality stems from $4c^2 < 4cx_*$, which is true in both scenarios: $x_*>c>0$ and $c<x_*<0$. This ends the proof.
\end{proof}

With the above two technical lemmas, we are in position to describe the inverse image of an open half-plane, which does not contain $0$ on its boundardy.

\begin{thm}
Let $P$ be an open half-plane such that $0\not\in \overline{P}$.
\begin{itemize}
	\item If $P$ is described either by $y>ax+b,\ b>0$ or $y<ax+b,\ b<0$ then 
	\begin{gather}
	\iota(P) = B\left(\frac{-a}{2b} + i\frac{-1}{2b}, \frac{\sqrt{a^2+1}}{2b}\right).
	\label{Ba2bi12ba212b}
	\end{gather}
	
	\item If $P$ is described either by $x>c>0$ or $x<c<0$ then 
	$$\iota(P) = B\left(\frac{1}{2c},\frac{1}{2c}\right).$$
\end{itemize}
\label{halfplanewithoutzero}
\end{thm}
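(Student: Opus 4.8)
The plan is to prove each part by establishing both inclusions, exactly as in the proof of the theorem on the image of an open ball. For the first part, the inclusion $\iota(P)\subset B\bigl(\tfrac{-a}{2b}+i\tfrac{-1}{2b},\tfrac{\sqrt{a^2+1}}{2b}\bigr)$ is already furnished by Lemma \ref{halfplanePeitheror}: any point $z_*\in P$ satisfies $y>ax+b$ (or $y<ax+b$), hence its image lands in the stated ball. So the only work is the reverse inclusion, showing that every point of the ball is hit by $\iota$ applied to some point of $P$.

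For the reverse inclusion I would mimic the technique from the open-ball theorem. Given $u+iv$ in the ball, I set
\begin{gather*}
x = \frac{u}{u^2+v^2}, \hspace{0.4cm} y = \frac{-v}{u^2+v^2},
\end{gather*}
so that $\iota(x+iy)=u+iv$ by the involutive identity $\iota\circ\iota=\mathrm{id}$. It then remains to verify that $x+iy$ lies in $P$, i.e. that the half-plane inequality holds for $(x,y)$. The crucial step is to rewrite the ball membership condition for $u+iv$ as a chain of equivalences (clearing denominators, using $R^2=\tfrac{a^2+1}{4b^2}$ and the center coordinates) until it becomes precisely the inequality $y>ax+b$ (respectively $y<ax+b$) expressed in terms of $u$ and $v$. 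This is the analogue of the displayed estimate chain in the open-ball proof, and it is where essentially all the algebra lives.

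The second part proceeds identically: Lemma \ref{iotazstarB12c12c} gives the forward inclusion $\iota(P)\subset B\bigl(\tfrac{1}{2c},\tfrac{1}{2c}\bigr)$, and for the reverse inclusion one again sets $x=\tfrac{u}{u^2+v^2}$, $y=\tfrac{-v}{u^2+v^2}$ and checks that the ball inequality for $u+iv$ transforms into the half-plane inequality $x>c$ (or $x<c$) for the preimage. Here the computation is lighter because the defining lines are vertical, so only the real parts interact.

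I expect the main obstacle to be bookkeeping rather than conceptual: one must track the sign of $b$ (or $c$) carefully when clearing denominators, since multiplying an inequality by a negative quantity reverses it, and the two sub-cases ($b>0$ with $>$, versus $b<0$ with $<$) must both be shown to collapse to membership in the same ball. The key algebraic fact enabling the equivalence is that $\left(\tfrac{\sqrt{a^2+1}}{2b}\right)^2 - \left(\tfrac{a}{2b}\right)^2 - \left(\tfrac{-1}{2b}\right)^2 = \tfrac{1}{4b^2}-\tfrac{1}{4b^2}+\tfrac{1}{4b^2}\cdot a^2 \cdot 0$; more precisely the radius-squared minus the squared center coordinates equals $\tfrac{1}{4b^2}\bigl((a^2+1)-a^2-1\bigr)+\tfrac{1}{4b^2}=\tfrac{1}{4b^2}$, which is what makes the cross-terms line up with the half-plane boundary. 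Once this identity is in hand, the rest is routine rearrangement.
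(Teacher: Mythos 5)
Your proposal matches the paper's proof essentially step for step: the forward inclusions come from Lemmas \ref{halfplanePeitheror} and \ref{iotazstarB12c12c}, and the reverse inclusions from defining the preimage by $x=\frac{u}{u^2+v^2}$, $y=\frac{-v}{u^2+v^2}$ and showing that membership of $u+iv$ in the ball is \emph{equivalent} to the half-plane inequality for the preimage; the paper carries this out by observing that $y-ax = -\frac{v+au}{u^2+v^2}$ and that lying in the ball (\ref{Ba2bi12ba212b}) is equivalent to $-\frac{v+au}{u^2+v^2}>b$ when $b>0$, resp.\ $<b$ when $b<0$. One correction, though: your closing ``key algebraic fact'' is garbled, and its final form is false --- the radius squared minus the squared norm of the center is $\frac{a^2+1}{4b^2}-\frac{a^2}{4b^2}-\frac{1}{4b^2}=0$, not $\frac{1}{4b^2}$. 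It is precisely this vanishing (geometrically: the boundary circle passes through the origin, which is the image of the point at infinity lying in $\overline{P}$) that kills the constant term when you expand $\bigl(u+\frac{a}{2b}\bigr)^2+\bigl(v+\frac{1}{2b}\bigr)^2<\frac{a^2+1}{4b^2}$, reducing it to $u^2+v^2+\frac{au+v}{b}<0$ and hence, after dividing by $u^2+v^2$ and minding the sign of $b$, exactly to $y>ax+b$ (resp.\ $y<ax+b$); with your value $\frac{1}{4b^2}$ the chain of equivalences would not close. The same vanishing occurs in the second part, where the center $\frac{1}{2c}$ and the radius $\frac{1}{2c}$ coincide in absolute value, so the ball inequality collapses to $\frac{u}{u^2+v^2}>c$ (resp.\ $<c$), just as you predicted.
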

\begin{proof}
At first, let us focus on the case when the half-plane $P$ is described either by $y>ax+b,\ b>0$ or $y<ax+b,\ b<0$. By Lemma \ref{halfplanePeitheror}, the image of $P$ lies inside the ball (\ref{Ba2bi12ba212b}). We prove that the half-plane is in fact mapped onto this ball. 

Suppose that $z_* = u_*+iv_*$ is an element of the ball (\ref{Ba2bi12ba212b}) and put
\begin{gather}
x_* = \frac{u_*}{u_*^2+v_*^2} \hspace{0.4cm}\text{and}\hspace{0.4cm} y_* = \frac{-v_*}{u_*^2+v_*^2}.
\label{definitionofxstarystar}
\end{gather}

\noindent
Note that the point $(x_*,y_*)$ lies on the line
$$y = ax - \frac{v_*+au_*}{u_*^2+v_*^2}.$$

\noindent
Furthermore, the fact that $(u_*,v_*)$ lies in the ball (\ref{Ba2bi12ba212b}) is equivalent to 
$$y_*-ax_* = -\frac{v_*+au_*}{u_*^2+v_*^2} > b \hspace{0.4cm}\text{if}\hspace{0.4cm} b>0,$$

\noindent
or
$$y_*-ax_* = -\frac{v_*+au_*}{u_*^2+v_*^2} < b \hspace{0.4cm}\text{if}\hspace{0.4cm} b<0.$$

\noindent
Either way, this means that $(x_*,y_*)$ lies in the half-plane $P$.

We proceed with proving the second part of the theorem. Suppose that $P$ is described either by $x>c>0$ or $x<c<0$. By Lemma \ref{iotazstarB12c12c}, the image of $P$ lies inside the ball $B\left(\frac{1}{2c},\frac{1}{2c}\right)$. We prove that the half-plane is in fact mapped onto this ball.

Suppose that $z_* = u_* + iv_*$ lies inside $B\left(\frac{1}{2c},\frac{1}{2c}\right)$. We define $x_*,y_*$ again by (\ref{definitionofxstarystar}). Finally, the fact that $(u_*,v_*)$ lies in $B\left(\frac{1}{2c},\frac{1}{2c}\right)$ is equivalent to 
$$x_* = \frac{u_*}{u_*^2+v_*^2} > c \hspace{0.4cm}\text{if}\hspace{0.4cm} c>0,$$

\noindent
or
$$x_* = \frac{u_*}{u_*^2+v_*^2} < c \hspace{0.4cm}\text{if}\hspace{0.4cm} c<0.$$
 
\noindent
We conclude that $(x_*,y_*)$ belongs to $P$, which ends the proof. 
\end{proof}

Naturally (still working under the assumption that $0\not\in\overline{P}$), if $P$ is described either by $y\geq ax+b,\ b>0$ or $y\leq ax+b,\ b<0$ then
\begin{gather}
\iota(P) = \overline{B}\left(\frac{-a}{2b} + i\frac{-1}{2b}, \frac{\sqrt{a^2+1}}{2b}\right).
\end{gather}
	
\noindent
Furthermore, if $P$ is described either by $x\geq c>0$ or $x\leq c<0$ then 
$$\iota(P) = \overline{B}\left(\frac{1}{2c},\frac{1}{2c}\right).$$
		
The theorem below can be treated as a kind of 'mirror image' to Theorem \ref{halfplanewithoutzero}. This duality is reflected in an immediate proof. 

\begin{thm}
Let $P$ be an open half-plane such that $0\in P$.
\begin{itemize}
	\item If $P$ is described either by $y<ax+b,\ b>0$ or $y>ax+b,\ b<0$ then
	\begin{gather*}
	\iota(P) = \overline{\complex^*}\backslash \overline{B}\left(\frac{-a}{2b} + i\frac{-1}{2b}, \frac{\sqrt{a^2+1}}{2b}\right).
	\end{gather*}
	
	\item If $P$ is described either by $x<c$, where $c>0$ or $x>c$, where $c<0$ then 
	$$\iota(P) = \overline{\complex^*}\backslash \overline{B}\left(\frac{1}{2c},\frac{1}{2c}\right).$$
\end{itemize}
\end{thm}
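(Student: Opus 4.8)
The plan is to avoid any fresh computation and instead exploit that $\iota$ is an involution of $\overline{\complex}$ — in particular a bijection, so it carries set-theoretic complements to complements. Combined with the already-established Theorem \ref{halfplanewithoutzero} and the remark on closed half-planes that immediately follows it, this is exactly the ``duality'' alluded to in the preamble to the statement. The guiding idea is that the open half-plane $P$ containing $0$ is, up to the point at infinity, the complement of the closed half-plane lying on the other side of the boundary line, and that closed half-plane is precisely of the type covered by the previous theorem (it does not meet $0$).

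Concretely, I would first fix one representative case, say $P = \{y < ax + b\}$ with $b > 0$, the remaining three cases being handled verbatim. Let $L$ denote the boundary line and $P' = \{y > ax+b\}$ the opposite open half-plane, so that $\complex$ is the disjoint union of $P$, $L$ and $P'$; hence $\overline{P'} = L \cup P'$ and
\[
P = \overline{\complex} \,\backslash\, \bigl(\overline{P'} \cup \{\infty\}\bigr).
\]
Since $b > 0$ we have $0 \notin \overline{P'}$, so $\overline{P'} = \{y \ge ax+b\}$ falls under the closed-half-plane remark following Theorem \ref{halfplanewithoutzero}, giving $\iota(\overline{P'}) = \overline{B}$, where $B$ is the ball in the present statement. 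Applying the bijection $\iota$ to the displayed identity, together with $\iota(\{\infty\}) = \{0\}$, then yields
\[
\iota(P) = \overline{\complex} \,\backslash\, \bigl(\overline{B} \cup \{0\}\bigr).
\]

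The only point requiring care — and what I would flag as the genuine, if modest, obstacle — is the bookkeeping at $0$ and $\infty$, i.e. checking that the two exceptional points land where the notation $\overline{\complex^*}\backslash\overline{B}$ demands. I would verify that the boundary circle of $B$ passes through the origin: its centre is $\left(-\frac{a}{2b}, -\frac{1}{2b}\right)$ and its distance to $0$ is $\frac{\sqrt{a^2+1}}{2|b|}$, which equals the radius, so $0 \in \overline{B}$. Consequently $\overline{B} \cup \{0\} = \overline{B}$ and $\overline{\complex}\backslash\overline{B} = \overline{\complex^*}\backslash\overline{B}$, giving exactly $\iota(P) = \overline{\complex^*}\backslash\overline{B}$. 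The vertical-line cases are identical: for $P = \{x < c\}$ with $c > 0$ one takes $\overline{P'} = \{x \ge c\}$, invokes the same remark to obtain $\iota(\overline{P'}) = \overline{B}\!\left(\frac{1}{2c}, \frac{1}{2c}\right)$, and notes that this circle (centre $\frac{1}{2c}$, radius $\frac{1}{2c}$) again passes through $0$; the sign cases $b<0$ and $c<0$ are symmetric.
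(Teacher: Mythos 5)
Your proof is correct and takes essentially the same route as the paper, whose entire argument is the one-line observation that the complementary half-plane $Q=\complex\backslash\overline{P}$ satisfies the hypotheses of Theorem \ref{halfplanewithoutzero}, i.e.\ exactly your duality via $\overline{P'}$ and the closed-half-plane remark, followed by complementation under the bijection $\iota$. If anything you are more careful than the paper, which leaves the bookkeeping at the exceptional points entirely implicit, whereas your check that $0\in\partial B$ (so $\overline{B}\cup\{0\}=\overline{B}$ and $\overline{\complex}\backslash\overline{B}=\overline{\complex^*}\backslash\overline{B}$) is precisely what makes the stated formula come out right.
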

\begin{proof}
It suffices to observe that $Q = \complex\backslash\overline{P}$ is an open half-plane satisfying the corresponding assumptions in Theorem \ref{halfplanewithoutzero}.
\end{proof}

In view of the above theorem (still assuming that $0\in P$), if $P$ is described either by $y\leq ax+b,\ b>0$ or $y\geq ax+b,\ b<0$ then  
\begin{gather*}
\iota(P) = \overline{\complex^*}\backslash B\left(\frac{-a}{2b} + i\frac{-1}{2b}, \frac{\sqrt{a^2+1}}{2b}\right).
\end{gather*}
	
\noindent
Furthermore, if $P$ is described either by $x\leq c$, where $c>0$ or $x\geq c$, where $c<0$ then
$$\iota(P) = \overline{\complex^*}\backslash B\left(\frac{1}{2c},\frac{1}{2c}\right).$$
		
The remaining two results concern the scenario, when $0$ lies on the boundary of the half-plane $P$.

\begin{lemma}
Let $z_* = x_*+iy_*$ belong to an open half-plane $P$ such that $0\in\partial P$.
\begin{itemize}
	\item If $P$ is described either by $y>ax$ or $y<ax$ then $\iota(z_*)$ belongs to the open half-plane described by $y<-ax$ or $y>-ax$, respectively. 
	\item If $P$ is described either by $x>0$ or $x<0$, then $\iota(z_*) \in P$. 
\end{itemize}
\label{halfplane0ontheboundary}
\end{lemma}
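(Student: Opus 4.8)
The plan is to compute the real and imaginary parts of $\iota(z_*)$ directly and then inspect the sign of the relevant linear combination; unlike the earlier theorems in this chapter, no parametrization or completion of squares is needed. First I would write
$$\iota(z_*) = \frac{x_*}{x_*^2+y_*^2} + i\frac{-y_*}{x_*^2+y_*^2},$$
and abbreviate $u := \frac{x_*}{x_*^2+y_*^2}$ and $v := \frac{-y_*}{x_*^2+y_*^2}$, so that $\iota(z_*) = u+iv$. The single observation that drives the whole argument is that the common denominator $x_*^2+y_*^2$ is strictly positive: since $0\in\partial P$ and $P$ is open we have $0\notin P$, hence $z_*\neq 0$. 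Consequently, multiplying an inequality through by $x_*^2+y_*^2$ never reverses it, and every membership condition transfers cleanly.

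For the first bullet I would read the hypothesis $z_*\in P$ as $y_*-ax_*>0$ (when $P:\ y>ax$) or $y_*-ax_*<0$ (when $P:\ y<ax$). A one-line computation gives
$$v + au = \frac{-y_*+ax_*}{x_*^2+y_*^2} = \frac{-(y_*-ax_*)}{x_*^2+y_*^2},$$
so that, the denominator being positive, the sign of $v+au$ is exactly the opposite of the sign of $y_*-ax_*$. Therefore $y_*>ax_*$ yields $v+au<0$, i.e. $v<-au$, placing $\iota(z_*)$ in the half-plane $\{y<-ax\}$; symmetrically $y_*<ax_*$ yields $v>-au$, placing $\iota(z_*)$ in $\{y>-ax\}$. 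This is precisely the stated correspondence.

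For the second bullet the argument is shorter still: $u=\frac{x_*}{x_*^2+y_*^2}$ carries the same sign as $x_*$, again because the denominator is positive. Hence $x_*>0$ forces $u>0$ and $x_*<0$ forces $u<0$, so $\iota(z_*)$ stays in the same half-plane $P$ in each case. I do not expect any genuine obstacle in this lemma — the entire content is the sign bookkeeping in the two displays above — so the only point to be careful about is matching each description of $P$ to the correct resulting half-plane and recording explicitly that $x_*^2+y_*^2>0$ legitimizes every manipulation.
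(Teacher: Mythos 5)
Your proposal is correct and follows essentially the same route as the paper: the paper likewise writes $\iota(z_*) = \frac{x_*}{x_*^2+y_*^2} + i\frac{-y_*}{x_*^2+y_*^2}$ and observes that $\frac{-y_*}{x_*^2+y_*^2} + a\frac{x_*}{x_*^2+y_*^2} < 0$ when $y_*>ax_*$, handling the remaining cases symmetrically and dismissing the second bullet as trivial. Your explicit remark that $x_*^2+y_*^2>0$ (since $0\in\partial P$ forces $z_*\neq 0$) is a small but welcome clarification that the paper leaves implicit.
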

\begin{proof}
Since 
$$\iota(z_*) = \frac{x_*}{x_*^2+y_*^2} + i \frac{-y_*}{x_*^2+y_*^2}$$

\noindent
then 
$$\frac{-y_*}{x_*^2+y_*^2} + a\frac{x_*}{x_*^2+y_*^2} < 0 \hspace{0.4cm}\text{if}\hspace{0.4cm} y_*>ax_*.$$

\noindent
This means that $\iota(z_*)$ lies in the half-plane $y<-ax$. The case when $P$ is described by $y<ax$ is treated analogously. 

The second part of the theorem is trivial.
\end{proof}

\begin{thm}
Let $P$ be an open half-plane such that $0\in \partial P$.
\begin{itemize}
	\item If $P$ is described either by $y>ax$ or $y<ax$ then $\iota(P)$ is the open half-plane described by $y>-ax$ or $y<-ax$, respectively. 
	\item If $P$ is described either by $x>0$ or $x<0$, then $\iota(P) = P$. 
\end{itemize}
\end{thm}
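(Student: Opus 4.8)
The plan is to mirror the structure of the preceding half-plane theorems exactly, using the corresponding Lemma \ref{halfplane0ontheboundary} to obtain one inclusion and then exhibiting a preimage to obtain the reverse. The content splits into the two bullet points, and in each case the boundary of $P$ passes through $0$, so by Theorem \ref{imageofcircle} (second bullet) and the earlier computations the inversion should again produce a half-plane through $0$ rather than a ball; the technical lemmas about balls are therefore not needed here.

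First I would handle the case $y>ax$. Lemma \ref{halfplane0ontheboundary} already gives the inclusion $\iota(P)\subset\{y<-ax\}$, so only the reverse inclusion $\{y<-ax\}\subset\iota(P)$ remains. Given a target point $u_*+iv_*$ with $v_*<-au_*$, I would set $x_*=\tfrac{u_*}{u_*^2+v_*^2}$ and $y_*=\tfrac{-v_*}{u_*^2+v_*^2}$, exactly as in (\ref{definitionofxstarystar}), so that $\iota(x_*+iy_*)=u_*+iv_*$ by the involutive nature of $\iota$. It then suffices to verify that $(x_*,y_*)$ satisfies $y_*>ax_*$. But
\begin{gather*}
y_*-ax_* = \frac{-v_*-au_*}{u_*^2+v_*^2} = \frac{-(v_*+au_*)}{u_*^2+v_*^2} > 0,
\end{gather*}
since $v_*+au_*<0$ by the assumption $v_*<-au_*$ and the denominator is positive. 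This places $x_*+iy_*$ in $P$, giving the reverse inclusion. The subcase $y<ax$ is symmetric, with all inequalities flipped.

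For the vertical case $x>0$, Lemma \ref{halfplane0ontheboundary} gives $\iota(z_*)\in P$ for every $z_*\in P$, i.e. $\iota(P)\subset P$. Applying this same inclusion to $\iota(P)$ in place of $P$, or simply using that $\iota$ is an involution, yields $\iota(\iota(P))\subset\iota(P)$, hence $P\subset\iota(P)$, and equality follows; the subcase $x<0$ is identical. I do not expect any genuine obstacle here: the only point requiring a modicum of care is the sign bookkeeping in the reverse inclusion for the oblique case, ensuring that the strict inequality $v_*<-au_*$ translates correctly through the substitution into $y_*>ax_*$, and confirming that the omitted point $0$ (respectively $\infty$) is correctly excluded, which is why the images are stated as punctured half-planes in $\overline{\complex^*}$.
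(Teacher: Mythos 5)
Your proof is correct and takes essentially the same route as the paper: Lemma \ref{halfplane0ontheboundary} supplies one inclusion, the substitution (\ref{definitionofxstarystar}) with the sign computation $y_*-ax_*=-\frac{v_*+au_*}{u_*^2+v_*^2}>0$ supplies the reverse in the oblique case, and the vertical case follows from the involutive character of $\iota$ (which the paper dismisses as trivial). Note that, exactly like the paper's own proof, you establish $\iota(\{y>ax\})=\{y<-ax\}$, so the ``respectively'' in the printed statement is swapped relative to Lemma \ref{halfplane0ontheboundary} --- a typo in the theorem statement that your argument (and the paper's) silently corrects.
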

\begin{proof}
At first, suppose that $P$ is described by $y>ax$. By Lemma \ref{halfplane0ontheboundary}, we know that $\iota(P)$ lies inside the open half-plane described by $y<-ax$. We show that $\iota(P)$ is the whole half-plane. Suppose that the point $u_*+iv_*$ satisfies $v_*<-au_*$. If we define $x_*,y_*$ by (\ref{definitionofxstarystar}), then 
$$y_*-ax_* = -\frac{v_*+au_*}{u_*^2+v_*^2} > 0.$$

\noindent
Thus $x_*+iy_*$ lies in $P$, which we aimed to prove. The case when $P$ is described by $y<ax$ is treated analogously. 

The second part of the theorem is trivial.
\end{proof}

Last but not least (assuming $0\in \partial P$) if $P$ is described either by $y\geq ax$ or $y\leq ax$ then 
$$\iota(P) = \bigg\{z\in\overline{\complex^*}\ :\ y\geq -ax\bigg\}$$ 

\noindent
or 
$$\iota(P) = \bigg\{z\in\overline{\complex^*}\ :\ y\leq -ax\bigg\},$$ 

\noindent
respectively. Furthermore, if $P$ is described either by $x\geq 0$ or $x\leq 0$, then 
$$\iota(P) = P\cap\overline{\complex^*}.$$

\end{document}